\def\theenumi{\textup{(\@roman\c@enumi)}}
\theoremstyle{plain}                       
\newtheorem{theorem}{Theorem}          
\newtheorem{lemma}[theorem]{Lemma}         
\newtheorem{corollary}[theorem]{Corollary} 
\theoremstyle{remark}                      
\newtheorem*{acknow}{Acknowledgements}          
\newtheorem*{remark}{Remark}          
\newtheorem*{example}{Example}          
\newcommand\sm{\setminus}
\newcommand\col{\colon}
\newcommand\sub{\subseteq}
\newcommand\bus{\supseteq}
\newcommand{\set}[2]{\{\,{\textstyle#1};\,{\textstyle #2}\,\}}
\newcommand{\inv}{^{-1}}                   
\newcommand\Z{\mathbb{Z}}
\newcommand\R{\mathbb{R}}
\renewcommand\H{\mathbb{H}}
\newcommand\conj{\overline}
\newcommand\latetilde{\widetilde{\phantom{x}}}
\DeclareMathOperator\re{Re}
\DeclareMathOperator\im{Im}
\begin{document}
\title[Quasi-actions and rough Cayley graphs]%
{Quasi-actions and rough Cayley graphs of locally compact groups}

\author{Pekka Salmi}

\address{\small Department of Mathematical Sciences, University of Oulu,
         PL~3000, FI-90014 Oulun yliopisto, Finland}

\email{pekka.salmi@iki.fi}

\subjclass[2010]{Primary 20F65; Secondary 05C25, 22D05, 43A07}

\keywords{Cayley graph, locally compact group, quasi-action, quasi-lattice}

\begin{abstract}
We define the notion of rough Cayley graph for 
compactly generated locally compact groups in terms 
of quasi-actions. We construct such a graph for 
any compactly generated locally compact group 
using quasi-lattices 
and show uniqueness up to quasi-isometry. 
A class of examples is given by the Cayley
graphs of cocompact lattices in compactly generated
groups. As an application, we show 
that a compactly generated group
has polynomial growth if and only if its rough Cayley graph
has polynomial growth (same for intermediate and exponential 
growth). Moreover, a unimodular compactly generated group 
is amenable if and only if its rough Cayley graph 
is amenable as a metric space.
\end{abstract}

\maketitle

\section{Introduction}

The purpose of this paper is to introduce the notion 
of rough Cayley graph for compactly generated locally compact 
groups and indicate its usefulness in abstract harmonic analysis.
The definition is based on 
the notion of quasi-action of a locally compact 
group on a metric space. Similar concepts have been 
fruitfully employed in the study of geometric group theory
in the case of discrete groups (see for example 
\cite{de-la-harpe:ggt, msw:quasi-actions-on-trees}),
and one would expect that the corresponding
concepts will be useful in the study of locally compact 
groups. One can study coarse geometry of compactly generated locally compact
groups using the word-length metric with respect to a compact generating set,
as done for example in 
\cite{cornulier-tessera:contracting-automorphisms, 
baumgartner-et-al:hyperbolic}. 
The rough Cayley graph ignores the local information but retains the
large-scale information thereby giving an alternative, but equivalent,
approach to coarse geometry of locally compact groups.

In \cite{kron-moller:rough-cayley}
Kr\"on and M\"oller defined 
the rough Cayley graph of a topological group $G$
to be a connected  
graph $X$ such that $G$ acts transitively on the set of 
vertices of $X$ and 
the stabilisers of the vertices are compact open subgroups of $G$.
We shall follow the terminology of \cite{baumgartner-et-al:hyperbolic}
where these graphs were renamed as relative Cayley graphs. 
Every totally disconnected, compactly generated, locally compact group
$G$ has a locally finite relative Cayley graph:
the vertex set of such a graph can be realised as the homogeneous 
space $G/U$ with respect to a compact open subgroup $U$.
In \cite{baumgartner-et-al:hyperbolic}
relative Cayley graphs were used to show that hyperbolic 
totally disconnected groups have flat rank at most $1$.
In the case of totally disconnected, compactly generated, locally
compact groups, the relative Cayley graph fits into our 
notion of rough Cayley graph. 
To give further examples of rough Cayley graphs, we shall show that the
Cayley graph of a cocompact lattice in a compactly generated group
is the rough Cayley graph of the ambient group. 
We shall also construct 
the rough Cayley graph of 
the affine group of the real line.
Our constructions of rough Cayley graphs, both in the abstract 
setting and in the example of the affine group, make use of 
quasi-lattices. 

As an application of rough Cayley graphs, we shall show that the
growth of a compactly  generated group can be
described in terms  of its rough Cayley graph. Moreover, we shall 
show that when the compactly generated group is unimodular, it is
amenable  if and only if its rough Cayley graph is amenable in the
sense  of \cite{block-weinberger:amenability}.
The example of the affine group 
shows that unimodularity is a necessary assumption for this result.

\section{Quasi-action}

We start with some terminology from coarse geometry.
When $X$ is a metric space and $A\sub X$, we write 
\[
N_r(A) = \set{x\in X}{d(x,a)\le r\text{ for some }a\in A}
\]
(and $N_r(x):= N_r(\{x\})$ for $x\in X$).
Let $C\ge 1$ and $r\ge 0$ be constants. 
A map $f\col X\to Y$ between metric spaces is 
a $(C,r)$\emph{-quasi-isometric embedding} if
\[ 
C\inv d_X(x,y) - r\le d_Y(f(x),f(y)) \le C d_X(x,y) + r
\]
for every $x,y\in X$. 
In this case, $f$ is a $(C,r)$\emph{-quasi-isometry} 
if $f$ is also $r$-\emph{coarsely onto}: 
$Y = N_r(f(X))$.
A \emph{coarse inverse} of a quasi-isometry $f\col X\to Y$ 
is a quasi-isometry $g\col Y\to X$ such that 
for some $r\ge 0$, we have
$d(g\circ f(x), x)\le r$ and $d(f\circ g(y),y)\le r$ 
for every $x\in X$ and $y\in Y$. 

A $(C,r)$-\emph{quasi-action} of a locally compact 
group $G$ on a metric space  $X$ is a map
$(s,x)\mapsto s\cdot x\col G\times X\to X$  satisfying the following conditions:
\begin{enumerate}
\item $x\mapsto s\cdot x\col X\to X$ is a $(C,r)$-quasi-isometry
      for every $s\in G$ 
\item  $d(e\cdot x,x) \le r$ for every $x\in X$, 
       where $e$ denotes the identity of~$G$ 
\item $d(s\cdot (t\cdot x), (st)\cdot x)\le r$
       for every $s,t\in G$ and $x\in X$
\item $K\cdot x$ is bounded whenever $K\sub G$ is compact and 
       $x\in X$.
\end{enumerate}
In the case of discrete groups, 
the notion of quasi-action has been used for
example in \cite{farb-mosher,msw:quasi-actions-on-trees}.
Note that in the discrete case 
condition (iv) is of course unnecessary.

We say that a quasi-action is 
\emph{cobounded}
if there is $r\ge 0$ 
such that for every $x\in X$ the map $s\mapsto s\cdot x\col G\to X$ 
is $r$-coarsely onto
(it might be more descriptive to call such quasi-actions 
coarsely transitive, but we follow the 
the terminology of \cite{msw:quasi-actions-on-trees}).
To simplify the notation, we shall 
use the same `$r$' to denote 
the constant associated with 
a $(C, r)$-quasi-action 
as well as the constant associated with coboundeness.

We say that a quasi-action is \emph{proper} if for every $R\ge 0$
and for every $x\in X$ the set
\[
\set{s\in G}{d(s\cdot x, x) \le R } 
\]
is relatively compact. 
Note that if $X$ is a proper metric space
(i.e.\ all closed, bounded sets in the metric space $X$ are compact),
a quasi-action is proper if and only if for every compact set $K\sub
X$ the set $\set{s\in G}{s\cdot K \cap K\ne \emptyset}$
is relatively compact (this latter condition agrees with the 
usual definition of proper action 
as in \cite[p. 87]{de-la-harpe:ggt}).

`Compactly generated group' refers to a compactly generated
\emph{locally compact} group. 
We consider compactly generated groups as metric spaces equipped with 
the left-invariant word-length metric with respect to a generating set that
is a compact symmetric neighbourhood of the identity. 
Given two compact generating neighbourhoods, one is contained to 
some power of the other (and vice versa), so 
the choice of the generating set does not affect the 
quasi-isometric class of the metric space,
that is, two different generating sets give quasi-isometric spaces.

Let $G$ be a compactly generated group.
We say that two quasi-actions 
$(s,x)\mapsto s\cdot x\col G\times X\to X$
and  $(s,y)\mapsto s\cdot y\col G\times Y\to Y$
on metric spaces $X$ and $Y$ are \emph{quasi-conjugate} if 
there is a quasi-isometry $\phi\col X\to Y$ and a constant 
$R\ge 0$ such that 
\[
d_Y( \phi(s\cdot x), s\cdot \phi(x) )\le R
\]
for every $s\in G$ and $x\in X$.
In the following lemma, which generalises Proposition~2.1
of \cite{farb-mosher} to our situation, 
we define a quasi-action on any metric space to which 
$G$ is quasi-isometric. The quasi-action is 
defined by conjugating the action of $G$ on itself
with a quasi-isometry, and so the resulting quasi-action is
naturally quasi-conjugate to the action of $G$ on itself.

\begin{lemma} \label{lemma:q-iso->q-action}
Let $G$ be a compactly generated group and 
$X$ a metric space. 
Suppose that $\phi\col G \to X$ is a quasi-isometry 
and $\psi\col X \to G$ is its coarse inverse.
Then $s\cdot x = \phi (s\psi(x))$, for $s\in G$ and $x\in X$, 
defines a proper cobounded quasi-action of $G$ on $X$.
\end{lemma}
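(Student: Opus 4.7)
The plan is to verify the four quasi-action axioms together with properness and coboundedness directly, exploiting left-invariance of the word-length metric on $G$ to keep all constants uniform in $s$. Fix constants so that both $\phi$ and $\psi$ are $(C,r_1)$-quasi-isometries and $d_G(\psi\phi(s),s) \le r_0$, $d_X(\phi\psi(x),x) \le r_0$ for every $s \in G$ and $x \in X$. For axiom (i), the map $x \mapsto \phi(s\psi(x))$ is the composition of $\psi$, the isometry $L_s\col t \mapsto st$, and $\phi$, so it is a quasi-isometric embedding with constants depending only on $C$ and $r_1$; coarse ontoness follows by taking $x = \phi(s\inv\psi(y))$ as an approximate preimage for a given $y\in X$ and running the same composition backwards. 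Axiom (ii) is immediate, since $e\cdot x = \phi\psi(x)$.

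For (iii), one computes
\[
s\cdot(t\cdot x) = \phi\bigl(s\,\psi\phi(t\psi(x))\bigr), \qquad (st)\cdot x = \phi(st\psi(x)),
\]
and uses $d_G(\psi\phi(t\psi(x)),\, t\psi(x)) \le r_0$; left-invariance transports this to the same bound after multiplying on the left by $s$, and $\phi$ then enlarges it by the factor $C$, producing the uniform bound $Cr_0+r_1$. For (iv), if $K\sub G$ is compact then $K\psi(x)$ has finite word-length diameter (compact sets in compactly generated locally compact groups are bounded in the word metric, since they are contained in some power of the compact generating neighbourhood), and the quasi-isometric embedding $\phi$ sends bounded sets to bounded sets. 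Coboundedness is immediate: with $x_0$ fixed, $s\mapsto \phi(s\psi(x_0))$ has image exactly $\phi(G)$, which is $r_1$-coarsely dense in $X$ by the coarse ontoness of $\phi$.

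For properness, the inequality $d_X(s\cdot x,x)\le R$, together with $d_X(\phi\psi(x),x) \le r_0$ and the lower quasi-isometric bound for $\phi$, yields $d_G(s\psi(x),\psi(x)) \le C(R+r_0+r_1)$. Left-invariance and symmetry of closed balls about $e$ (the generating neighbourhood is symmetric) then place $s$ inside the conjugate $\psi(x) B \psi(x)\inv$, where $B$ is the closed ball of radius $C(R+r_0+r_1)$ about $e\in G$. Closed balls in the word-length metric of a compactly generated locally compact group are compact, and conjugation is a homeomorphism, so this conjugate is compact, establishing properness. The only real obstacle is bookkeeping: ensuring that the quasi-action constants do not depend on $s\in G$, which is precisely what left-invariance of the word-length metric supplies through the isometry $L_s$ sandwiched between $\psi$ and $\phi$.
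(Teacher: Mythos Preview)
Your proof is correct and follows essentially the same approach as the paper's. The paper dispatches axioms (i)--(iv) and coboundedness in a single sentence (``easy to check''; ``$G\psi(x)=G$''), while you spell them out; for properness both arguments run the same triangle inequality and lower quasi-isometry bound to land $s$ in the conjugate $\psi(x)K^m\psi(x)^{-1}$ of a compact word-ball.
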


\begin{proof}
It is easy to check that we indeed have a quasi-action. 
Coboundedness is also immediate because $\phi$ is coarsely onto
and $G\psi(x) = G$. To check that the quasi-action is proper,
fix $R>0$ and $x\in X$. Then $d(s\cdot x, x)\le R$ implies that 
\begin{align*}
R&\ge d(\phi(s\psi(x)), x) \ge 
d\bigl( \phi(s\psi(x)), \phi(\psi(x)) \bigr) - d( \phi(\psi(x)), x) \\
&\ge C\inv d\bigl( s\psi(x), \psi(x) \bigr) - r
\end{align*}
for some constants $C\ge 1$ and $r\ge 0$. It follows that 
for some integer $m$, we have $s\in \psi(x) K^m \psi(x)\inv$.
Since the same $m$ works for all $s\in G$ such that 
$d(s\cdot x, x)\le R$, the quasi-action is proper. 
\end{proof}

The following result is an analogue of the \v Svarc--Milnor lemma
for quasi-actions of locally compact groups 
(for the classical \v Svarc--Milnor lemma, see for example 
\cite[Theorem IV.B.23]{de-la-harpe:ggt}). 
Following \cite{abels-margulis}, we say that a metric space $X$ 
is $c$-\emph{coarsely geodesic}, with $c\ge 0$, if for every $x,y\in X$ 
there is $f\col [0, a] \to X$ such that 
$f(0) = x$, $f(a) = y$ and 
\[
|s-t| - c \le d(f(s), f(t))\le |s-t| + c
\]
for every $s,t\in [0,a]$. The map $f$ is called a 
$c$-\emph{coarse geodesic} from $x$ to $y$.

\begin{lemma} \label{lemma:svarc-milnor}
Suppose that $G$ is a locally compact group that 
has a proper cobounded quasi-action $(s,x)\mapsto 
s\cdot x\col G\times X\to X$ on a coarsely geodesic metric space $X$. 
Then $G$ is compactly generated and for any fixed $x\in X$ 
the map $s\mapsto s\cdot x\col G\to X$ is a quasi-isometry.
Moreover, the quasi-action of $G$ on $X$ is 
quasi-conjugate to the left action of $G$ on itself. 
\end{lemma}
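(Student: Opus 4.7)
The plan is to mimic the classical \v Svarc--Milnor argument, modified to absorb the slack inherent in a quasi-action. Fix a basepoint $x_0\in X$, let $c\ge 0$ witness coarse geodesicity, and let $r_1\ge 0$ witness coboundedness so that $X = N_{r_1}(G\cdot x_0)$. For $R\ge 0$ set
\[
B_R = \set{s\in G}{d(s\cdot x_0,x_0)\le R}.
\]
Properness makes every $B_R$ relatively compact, and condition (iv) together with local compactness yields some compact symmetric neighborhood $V$ of $e$ with $V\sub B_{R_0}$ for an $R_0\ge 0$. Taking $R$ large (to be fixed below), I would let $S = \overline{B_R}\cup \overline{B_R}\inv\cup V$; this is a compact symmetric neighborhood of $e$, and the content of the lemma is that $S$ generates $G$ and that $s\mapsto s\cdot x_0$ is a quasi-isometry from $(G,|\cdot|_S)$ to $X$.

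The heart of the argument is an orbit-chaining construction. Given $s\in G$, pick a $c$-coarse geodesic $f\col [0,a]\to X$ from $x_0$ to $s\cdot x_0$, sample it at the integer times $t_i = i$ for $i=0,\dots,n-1$ and $t_n = a$, where $n = \lceil a\rceil$, and use coboundedness to choose $s_i\in G$ with $d(s_i\cdot x_0, f(t_i))\le r_1$, forcing $s_0 = e$ and $s_n = s$. Then $d(s_i\cdot x_0, s_{i+1}\cdot x_0)\le 1+c+2r_1$. Applying the quasi-isometry $s_i\inv$ via condition (i), and then using condition (iii) twice (to pass between $s_i\inv(s_{i+1}\cdot x_0)$ and $(s_i\inv s_{i+1})\cdot x_0$, and between $s_i\inv(s_i\cdot x_0)$ and $x_0$ via (ii)), one obtains
\[
d((s_i\inv s_{i+1})\cdot x_0, x_0)\le C(1+c+2r_1) + 4r,
\]
where $C,r$ are the quasi-action constants. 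Fixing $R\ge \max(R_0,\, C(1+c+2r_1)+4r)$ puts each factor $s_i\inv s_{i+1}$ inside $S$, and the telescoping product $s = \prod_i(s_i\inv s_{i+1})$ exhibits $s$ as a word in $S$ of length $n\le \alpha\,d(x_0,s\cdot x_0)+\beta$ for constants $\alpha,\beta>0$.

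From here everything else is nearly automatic. The linear bound on $|s|_S$ just obtained is the lower half of the quasi-isometric embedding inequality for $\phi(s):=s\cdot x_0$. For the upper half, writing $s = u_1\cdots u_n$ with $u_i\in S$ and applying (i) and (iii) inductively along the prefixes yields $d(\phi(s),x_0)\le n(CR+2r)$. Coboundedness makes $\phi$ $r_1$-coarsely onto, so $\phi$ is a quasi-isometry. Finally, axiom (iii) immediately gives $d(\phi(st), s\cdot\phi(t)) = d((st)\cdot x_0, s\cdot(t\cdot x_0))\le r$, so $\phi$ is a quasi-conjugacy from left multiplication to the given quasi-action.

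The main obstacle is the constant-chasing in the orbit-chaining step: one must carefully track how axioms (i) and (iii) compose when $s_i\inv$ transports the bound $d(s_i\cdot x_0, s_{i+1}\cdot x_0)\le 1+c+2r_1$ into a bound on $d((s_i\inv s_{i+1})\cdot x_0,x_0)$, and then arrange that a single choice of $R$ simultaneously makes $S$ contain a neighborhood of $e$, is symmetric, and absorbs every factor $s_i\inv s_{i+1}$. Everything else in the proof is formal.
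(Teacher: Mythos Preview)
Your proposal is correct and follows essentially the same route as the paper: fix a basepoint, use properness to enclose $B_R$ in a compact symmetric generating neighbourhood, chain along a $c$-coarse geodesic sampled at unit-scale, use coboundedness to pull the sample points back to orbit points $s_i\cdot x_0$, and bound $d((s_i^{-1}s_{i+1})\cdot x_0,x_0)$ via axioms (i)--(iii) to force each factor into the generating set; the upper Lipschitz bound by iterated prefixes and the quasi-conjugacy via axiom (iii) are likewise identical. The only cosmetic differences are that the paper absorbs your separate $r_1$ into $r$, samples with a slight offset rather than at pure integers, and passes directly from the one-variable bound $|u|_K\lesssim d(u\cdot x,x)$ to the two-variable bound $d_G(s,t)\lesssim d(s\cdot x,t\cdot x)$ (a step you should make explicit, but which is the same routine application of (i) and (iii) you already used).
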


\begin{proof}
Let $C\ge 1$, $r\ge 0$ and $c\ge 0$ be constants
such that $X$ is $c$-coarsely geodesic and 
$G$ has a $(C,r)$-quasi-action on $X$.
Fix $x\in X$. 
Choose $R = C(2r+ c + 1) + 4r$ and let $K$ 
be a compact symmetric neighbourhood of the identity containing
\[
\set{s\in G}{ d(s\cdot x, x) \le R}
\]
($K$ exists because the quasi-action is proper).
We shall first show that $K$ generates $G$. 

Let $u\in G\sm K$ and let $n\ge 2$ be the unique integer such that 
\[
R - r -c + n-2 \le d(u\cdot x, x) < R - r - c + n-1.
\]
Let $f\col [0,a]\to X$ be a $c$-coarse geodesic from $x$ to 
$u\cdot x$, and put $y_0 = x$, $y_n = u\cdot x$ and 
$y_i = f(R-r-c+i-1)$ for $i = 1, 2, \ldots, n-1$.
Then put $s_n = u$ and for each $i=1$, \ldots,
$n-1$ choose $s_i\in G$ such that $y_i \in N_{r}(s_i\cdot x)$,
using coboundedness. Then $s_1\in K$ and for every $i=1$, \ldots, $n-1$
\begin{align*}
d(s_i\inv s_{i+1}\cdot x, x) \le C  d(s_{i+1}\cdot x, s_i\cdot x) + 4r
\le C (2r+ c +1) + 4r = R
\end{align*}
and so $s_i\inv s_{i+1}\in K$. Therefore 
\[
u = s_n = s_1 (s_1\inv s_2)(s_2\inv s_3)\ldots (s_{n-1}\inv s_n)
\in K^n.
\]
It follows that $K$ generates $G$, so we may use the word-length metric 
on $G$ with respect to $K$. Another consequence of the preceding
calculation is that for every $s,t\in G$ with $s\inv t\notin K$  
there is $n\ge 2$ such that $s\inv t\in K^n$ and 
\[
n \le d(s\inv t\cdot x, x) + 2 + r +c - R.
\]
Hence
\[
d_G(s, t) \le C  d(s\cdot x, t\cdot x) + 3r + 2 + r +c - R.
\]
Including the case when $s\inv t\in K$, we have 
\[
d_G(s,t) \le C  d(s\cdot x, t\cdot x) + 1
\]
for every $s,t\in G$.

Let 
\[
M = \sup\set{d(s\cdot x, x)}{s\in K},
\]
which exists because $K$ is compact. For every $s_1$, $s_2$, \ldots,
$s_n$ in $K$, we have
\begin{align*}
d(s_1\ldots s_n\cdot x, x)
&\le d(s_1 \cdot x, x) + d(s_1s_2\cdot x, s_1\cdot x) + \cdots \\
 &\quad   +d(s_1\ldots s_n\cdot x, s_1\ldots s_{n-1} \cdot x) \\
&\le M + (n-1)(C M + 2r) 
\end{align*}
Suppose that $d_G(s,t) = n$ so that $s\inv t\in K^n$.
Then 
\[
d(s\cdot x, t\cdot x) \le C  d(s\inv t\cdot x, x) + 4C r 
\le C (C M + 2r) d_G(s,t) + 4 C r
\]
by the preceding calculation.
As the quasi-action is cobounded, the map $\phi\col s\mapsto s\cdot x$ 
is coarsely onto, and so it is a quasi-isometry.

Finally, for every $s,t\in G$
\[
d(s\cdot \phi(t), \phi(st)) = d(s\cdot(t\cdot x), (st)\cdot x),
\]
so the action of $G$ on itself is quasi-conjugate 
to the quasi-action of $G$ on $X$, as the latter is coarsely
associative.  
\end{proof}

\begin{remark} 
Suppose that the topology of a locally compact group $G$ is 
induced by  a left-invariant metric $d$. 
If the metric space $(G,d)$ is coarsely geodesic and proper,
we may apply Lemma~\ref{lemma:svarc-milnor} 
to the left action of $G$ on itself 
and see that $(G,d)$ is quasi-isometric to 
$G$ equipped with the word-length metric.
\end{remark}

\section{Rough Cayley graph} \label{sec:rough}

In this section we shall give the
formal definition of rough Cayley graph
and consider two classes of examples 
of rough Cayley graphs, namely, the so-called relative Cayley 
graphs of totally disconnected compactly generated groups 
and the usual Cayley graphs of cocompact lattices in compactly 
generated groups.
Both classes include the Cayley graphs of finitely generated discrete 
groups. As a further example not belonging to either of these classes,
we shall construct the rough Cayley graph of the affine group of the
real line. 

We consider graphs as discrete metric spaces consisting 
of vertices and equipped with the graph metric.
A graph is said to be \emph{uniformly locally finite}
if there is a uniform bound on the degree of vertices.

Let $G$ be a compactly generated locally compact group.
A \emph{rough Cayley graph} of $G$ is a uniformly locally finite, 
connected graph $X$ such that $G$ has a proper cobounded
quasi-action on $X$.

The notion of quasi-lattice is important for 
our constructions of rough Cayley graphs. 
Let $X$ be a metric space. 
A  subset $A\sub X$ is $r$-\emph{coarsely dense} in $X$ 
if $N_r(A) = X$. A \emph{quasi-lattice} in $X$ 
is a coarsely dense subset $\Gamma\sub X$ such that 
for every $R>0$ there is $M>0$ such that
\[
|\Gamma\cap N_R(x)|\le M
\]
for every $x\in X$.

\begin{lemma} \label{lemma:q-lat}
Let $X$ be a coarsely geodesic metric space and 
let $\Gamma$ be a quasi-lattice in $X$. 
Then there is a graph structure on $\Gamma$ that 
makes $\Gamma$ a  uniformly locally finite, connected
graph that is quasi-isometric to $X$.
\end{lemma}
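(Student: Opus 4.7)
The plan is to introduce a graph structure on $\Gamma$ by thresholding the $X$-distance. Fix constants $c\ge 0$ with $X$ being $c$-coarsely geodesic and $D\ge 0$ with $N_D(\Gamma) = X$, and set the threshold $R := 2D + c + 1$. The quasi-lattice hypothesis supplies a bound $M$ such that $|\Gamma\cap N_R(x)|\le M$ for every $x\in X$. Declare two distinct points $\gamma,\gamma'\in\Gamma$ adjacent precisely when $d_X(\gamma,\gamma')\le R$; uniform local finiteness is then immediate, since the degree of any $\gamma$ is at most $|\Gamma\cap N_R(\gamma)|-1\le M-1$.

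Connectivity is obtained by discretising a coarse geodesic. Given $\gamma,\gamma'\in\Gamma$, pick a $c$-coarse geodesic $f\col [0,a]\to X$ from $\gamma$ to $\gamma'$, set $n = \lceil a\rceil$, and let $y_0 := \gamma$, $y_n := \gamma'$, and $y_i := f(i)$ for $0 < i < n$. Using coarse density, choose $\gamma_i\in\Gamma$ with $d_X(y_i,\gamma_i)\le D$, taking $\gamma_0 = \gamma$ and $\gamma_n = \gamma'$. The coarse geodesic inequality yields $d_X(y_i,y_{i+1})\le 1+c$, so $d_X(\gamma_i,\gamma_{i+1})\le 2D + c + 1 = R$; consecutive $\gamma_i$ are therefore equal or adjacent, and $\gamma$ is connected to $\gamma'$ in the graph.

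Finally, the inclusion $\iota\col\Gamma\hookrightarrow X$ is the candidate quasi-isometry. Coarse density gives $N_D(\iota(\Gamma)) = X$, so $\iota$ is coarsely onto. Each edge represents $X$-distance at most $R$, hence $d_X(\gamma,\gamma')\le R\,d_\Gamma(\gamma,\gamma')$; and the construction above shows $d_\Gamma(\gamma,\gamma')\le n\le a+1\le d_X(\gamma,\gamma')+c+1$ after using $a-c\le d_X(\gamma,\gamma')$. Together these inequalities certify that $\iota$ is a quasi-isometry. The only subtlety is balancing the threshold $R$: it must be large enough that sampling a coarse geodesic at unit spacing produces valid edges, which forces $R\ge 2D+c+1$, while the uniform local finiteness of the resulting graph is automatic from the quasi-lattice condition for any fixed $R$. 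I do not expect any genuine obstacle beyond fixing these constants coherently.
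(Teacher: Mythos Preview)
Your proposal is correct and follows essentially the same approach as the paper: both define adjacency by the threshold $2D+c+1$ (the paper writes $2r+c+1$), both discretise a $c$-coarse geodesic at integer times and project the sample points to $\Gamma$ to obtain connectivity together with the bound $d_\Gamma\le d_X+c+1$, and both read off the reverse inequality and uniform local finiteness directly from the edge threshold and the quasi-lattice hypothesis. The only differences are cosmetic (notation and the order in which the three properties are verified).
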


\begin{proof}
Fix $c\ge 0$ such that $X$ is $c$-coarsely geodesic
and $r\ge 0$  such that $X = N_r(\Gamma)$. 
We declare that vertices $x, y\in \Gamma$,  $x\ne y$, are adjacent if 
$d(x,y) \le 2r + c + 1$. 
We denote the resulting graph metric on $\Gamma$ 
by $d_\Gamma$, but of course we still need to check that 
$\Gamma$ is in fact a connected graph so that 
$d_\Gamma$ is well defined. 

Given $x,y\in \Gamma$, let $f\col [0,a]\to X$ 
be a $c$-coarse geodesic from $x$ to $y$. 
Similarly to the proof of Lemma~\ref{lemma:svarc-milnor},
put $y_k = f(k)$ for integers $0<k<a$ and
pick $x_k\in \Gamma\cap N_r(y_k)$. Put 
$x_0 = x$ and $x_n = y$ where $n = \lceil a \rceil$.
Then for every $k = 0$, $1$, \ldots, $n-1$
\[
d(x_k, x_{k+1}) \le 2r + c + 1,
\]
and so $x_k$ and $x_{k+1}$ are adjacent. 
This shows that $\Gamma$ is connected as a graph, 
and morever, that
\[
d_\Gamma(x,y)\le n < a+1\le d(x,y) + c + 1.
\]
On the other hand, the definition of the graph metric $d_\Gamma$ 
immediately yields
\[
d(x,y) \le (2r + c +1) d_\Gamma(x,y).
\]
It follows that $\Gamma$ equipped 
with the graph metric is quasi-isometric 
to $X$.

As $\Gamma$ is a quasi-lattice in $X$, 
there is $M>0$ such that 
\[
|\Gamma\cap N_{2r+c+1}(x)|\le M
\]
for every $x\in X$. In particular, $M$ is a uniform bound 
on the degree of vertices, and so $\Gamma$ is uniformly 
locally finite.
\end{proof}

A set $T\sub G$ is \emph{right uniformly discrete} 
with respect to a relatively compact 
neighbourhood $V$ of the identity 
if $t V \cap t' V = \emptyset$ whenever $t\ne t'$ are in $T$.
A right uniformly discrete subset is maximal if it is not properly
contained to another subset that is right uniformly discrete 
with respect to $V$ (this is equivalent to $TVV\inv = G$).

The following result gives the existence and the uniqueness of rough 
Cayley graph. It should be compared with 
Theorems~2.2 and 2.7+ of \cite{kron-moller:rough-cayley} 
that give the existence and the uniqueness of relative Cayley graphs of 
totally disconnected compactly generated groups, but note that 
our argument is completely different from those in 
\cite{kron-moller:rough-cayley}.

\begin{theorem} \label{thm:exists}
There exists a rough Cayley graph for any 
compactly generated locally compact group $G$.
The rough Cayley graph of $G$ is unique in the sense that
the quasi-actions of $G$ on two of its rough Cayley graphs are
quasi-conjugate.  
\end{theorem}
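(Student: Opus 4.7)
The plan is to build a rough Cayley graph as a graph structure on a suitable quasi-lattice in $G$ itself, and then to deduce uniqueness from Lemma~\ref{lemma:svarc-milnor}.

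For existence, fix a compact symmetric generating neighbourhood $K$ of the identity and endow $G$ with the corresponding left-invariant word-length metric. This metric is $1$-coarsely geodesic: a minimal factorisation $s = k_1 \cdots k_n$ with each $k_i \in K$ yields a $1$-coarse geodesic $[0,n] \to G$ via partial products. Using Zorn's lemma, pick a maximal right uniformly discrete subset $T \sub G$ with respect to some fixed relatively compact neighbourhood $V$ of the identity. Maximality forces $T V V\inv = G$, so $T$ is coarsely dense in $G$ (since $V V\inv$ has bounded word length). The crucial step is to verify that $T$ is a quasi-lattice: given $R > 0$, choose $m$ with $V \sub K^m$; then for every $t \in T \cap N_R(x)$ the translate $tV$ is contained in $N_{R+m}(x)$, and by disjointness of $\set{tV}{t\in T}$ together with left-invariance of a Haar measure $\mu$ on $G$,
\[
|T \cap N_R(x)|\,\mu(V) \;\le\; \mu(N_{R+m}(x)) \;=\; \mu(N_{R+m}(e)),
\]
where the final equality uses left-invariance of both $\mu$ and the word-length metric; the right-hand side is finite because $N_{R+m}(e)$ is contained in a finite power of the compact set $K$. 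Now Lemma~\ref{lemma:q-lat} equips $T$ with a uniformly locally finite, connected graph structure quasi-isometric to $G$, and Lemma~\ref{lemma:q-iso->q-action} converts this quasi-isometry into a proper cobounded quasi-action of $G$ on $T$. Hence $T$ is a rough Cayley graph.

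For uniqueness, let $X$ and $Y$ be two rough Cayley graphs of $G$. Any uniformly locally finite connected graph, viewed as its vertex set with graph metric, is $1$-coarsely geodesic (a shortest edge-path $x = v_0, v_1, \ldots, v_n = y$ produces the $1$-coarse geodesic $t \mapsto v_{\lfloor t \rfloor}$). Lemma~\ref{lemma:svarc-milnor} therefore applies to both quasi-actions and shows that each of them is quasi-conjugate to the left action of $G$ on itself. Since quasi-conjugacy is an equivalence relation --- symmetry comes from passing to a coarse inverse of the conjugating quasi-isometry, and transitivity from composition, both using that every $s \in G$ acts by a $(C,r)$-quasi-isometry with uniform constants --- the quasi-actions on $X$ and on $Y$ are quasi-conjugate to each other, which is the uniqueness assertion.

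The technical heart of the argument is the quasi-lattice estimate. Left-invariance of the Haar measure must be paired with left-invariance of the word-length metric to yield a bound on $|T \cap N_R(x)|$ that is independent of $x$; this is precisely what fails on the right, and is the reason for working with right uniformly discrete sets rather than the symmetric notion. Once this uniform bound is in hand, the rest of the proof is a straightforward assembly of Lemmas~\ref{lemma:q-iso->q-action}, \ref{lemma:svarc-milnor}, and~\ref{lemma:q-lat}.
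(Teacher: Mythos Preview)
Your proof is correct and follows essentially the same route as the paper: build a quasi-lattice from a maximal right uniformly discrete set, apply Lemma~\ref{lemma:q-lat} and Lemma~\ref{lemma:q-iso->q-action} for existence, and invoke Lemma~\ref{lemma:svarc-milnor} for uniqueness. The only noteworthy difference is in the verification that $T$ is a quasi-lattice: the paper uses a covering argument (cover $K^m$ by finitely many right translates of $V$, each of which can meet $x\inv T$ in at most one point), whereas you use the Haar measure to bound $|T\cap N_R(x)|$ by $\mu(N_{R+m}(e))/\mu(V)$; both are short and valid, with the paper's version avoiding any appeal to measure theory.
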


\begin{proof}
Let $V$ be a  relatively compact neighbourhood of 
the identity. 
As $G$ is $\sigma$-compact we may construct 
a  maximal right uniformly discrete set $X\sub G$ 
with respect to $V$ using induction
(in general, maximal right uniformly discrete sets exist by Zorn's lemma).
We shall first show that $X$ is a quasi-lattice in $G$.

Fix a compact symmetric neighbourhood $K$ of the identity 
such that $K$ generates $G$, and consider $G$ as a metric space 
equipped with the word-length metric with respect to $K$. 
There is an integer $n$ such that $VV^{-1}\sub K^n$, and so
$X$ is $n$-dense in $G$.
For any given $m>0$, the set $K^m$ can be covered by finitely 
many, say $M$, right translates of $V$.
Fix $y\in G$. Since $X$ is right uniformly discrete with respect 
to $V$, no right translate of $V$ may contain more than one 
point of the form $x\inv y$ with $x\in X$. Therefore
$|X\cap yK^m|\le M$, and so $X$ is a quasi-lattice.

By Lemma~\ref{lemma:q-lat} we can give $X$ a graph structure
such that $X$ becomes a uniformly locally finite, connected 
graph. Moreover, $X$ equipped with the graph metric 
is quasi-isometric with $G$. By Lemma~\ref{lemma:q-iso->q-action}
there is a cobounded proper quasi-action of $G$ on $X$. 
Consequently, $X$ is a rough Cayley graph of $G$.

If $X$ and $Y$ are two rough Cayley graphs of $G$,
then the quasi-actions of $G$ on $X$ and $Y$ are 
both quasi-conjugate to the left action of $G$ on itself
by Lemma~\ref{lemma:svarc-milnor}. 
Therefore the quasi-actions of $G$ on $X$ and $Y$ are 
quasi-conjugate. 
\end{proof}

Recall from the introduction the 
notion of relative Cayley graph of a topological group $G$ 
due to Kr\"on and M\"oller 
\cite{kron-moller:rough-cayley,baumgartner-et-al:hyperbolic}:
a connected graph $X$ such that $G$ acts transitively 
on the set of vertices of $X$ and 
the stabilisers of the vertices are compact open subgroups of $G$.
When $G$ is a totally disconnected compactly generated group,
its relative Cayley graph, which in this case is locally finite,
may be realised as the homogeneous space $G/U$ 
where $U$ is  a compact open subgroup
(see \cite[p.~643]{kron-moller:rough-cayley}).
It is not very difficult to show that the graph
is a rough Cayley graph in our sense, that is, the action 
is cobounded and  proper.
Then by Theorem~\ref{thm:exists} any rough Cayley graph 
of a totally disconnected compactly generated 
group is quasi-conjugate to a relative Cayley graph,  
and so the quasi-action is in this case quasi-conjugate to 
an isometric action.

The following result generalises Corollary~2.11 of 
\cite{kron-moller:rough-cayley} concerning relative Cayley 
graphs. Here we say that a subgroup $H$ of $G$ is \emph{cocompact}
if the quotient space $G/H$ is compact. 
The first statement of the result is known. 

\begin{theorem}
Suppose that  $G$ is a compactly generated group and 
$H$ is a closed cocompact subgroup of $G$. 
Then $H$ is compactly generated and the rough Cayley 
graph of $H$ may be viewed as the rough Cayley graph of $G$. 
\end{theorem}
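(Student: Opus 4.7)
The plan is to take a rough Cayley graph $X$ of $G$, whose existence is guaranteed by Theorem~\ref{thm:exists}, restrict the proper cobounded $G$-quasi-action on $X$ to the subgroup $H$, and then apply Lemma~\ref{lemma:svarc-milnor} to the resulting $H$-quasi-action. This will simultaneously yield that $H$ is compactly generated and that $X$ itself serves as a rough Cayley graph of $H$, which is exactly what the theorem asserts.

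First I would verify that the restriction $(h,x)\mapsto h\cdot x$ is a quasi-action of $H$ on $X$: conditions (i)--(iii) are inherited verbatim, while (iv) uses that $H$ is closed, so compact subsets of $H$ are compact in $G$. Properness also survives restriction, since $\set{h\in H}{d(h\cdot x,x)\le R}$ is the intersection with the closed subgroup $H$ of a relatively compact subset of $G$, hence relatively compact in~$H$.

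The substantive point is coboundedness, and I expect this to be the main obstacle because of a subtle left/right asymmetry. Cocompactness of $H$ gives $G = KH$ for some compact $K$, but the word metric on $G$ is left-invariant, so this decomposition does not directly control the distance from $g$ to $H$. The fix is that $G/H$ is homeomorphic to $H\backslash G$ via $gH\mapsto Hg\inv$, producing a compact $K'\sub G$ with $G = HK'$. Then whenever $g = hk'$ with $h\in H$ and $k'\in K'$, left-invariance of the word metric gives $d_G(g,h) = d_G(k',e)$, which is uniformly bounded as $K'$ is compact. Hence $H$ is coarsely dense in $G$. Since Lemma~\ref{lemma:svarc-milnor} applied to the $G$-quasi-action supplies a quasi-isometry $\phi\col s\mapsto s\cdot x$ from $G$ to $X$, the image $\phi(H) = H\cdot x$ is coarsely dense in $X$; uniformity in the basepoint is routine using that the orbit maps $x\mapsto h\cdot x$ share the same quasi-isometry constants.

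With properness and coboundedness in hand, and since a connected graph is coarsely geodesic, Lemma~\ref{lemma:svarc-milnor} applied to the $H$-quasi-action on $X$ shows that $H$ is compactly generated and that $h\mapsto h\cdot x$ is a quasi-isometry $H\to X$. Because $X$ remains uniformly locally finite and connected and now carries a proper cobounded $H$-quasi-action, it is by definition a rough Cayley graph of $H$, so uniqueness (Theorem~\ref{thm:exists}) lets us identify the rough Cayley graph of $H$ with that of $G$.
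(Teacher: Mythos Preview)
Your proposal is correct and closely parallels the paper's argument, though with the direction reversed. The paper applies Lemma~\ref{lemma:svarc-milnor} to the isometric left action of $H$ on $(G,d_G)$ itself---coboundedness coming from cocompactness, properness from $\set{h\in H}{d(hx,x)\le R}=N_R(x)x^{-1}\cap H$ being compact---to conclude that $H$ is compactly generated and quasi-isometric to $G$; it then invokes Lemma~\ref{lemma:q-iso->q-action} to equip any rough Cayley graph of $H$ with a proper cobounded $G$-quasi-action. You instead start from a rough Cayley graph $X$ of $G$, restrict the $G$-quasi-action to $H$, and apply Lemma~\ref{lemma:svarc-milnor} directly to that restricted quasi-action.

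Both routes hinge on the same observation that cocompactness yields $G=HK'$ for some compact $K'$, hence coarse density of $H$ in $G$. The paper's version is a touch shorter because working with an honest isometric action on $G$ avoids the quasi-action bookkeeping (your uniformity-in-basepoint step, while indeed routine, does need the coarse-associativity estimate). Your version has the virtue of never leaving the graph $X$, so the conclusion that $X$ is literally a rough Cayley graph of $H$ is immediate rather than obtained via a second lemma. Your discussion of the left/right asymmetry in passing from $G/H$ compact to $G=HK'$ is well taken and makes explicit a point the paper compresses into the single phrase ``As $H$ is cocompact, the action is cobounded.''
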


\begin{proof}
Consider the left action of $H$ on $G$ equipped with 
the word-length metric. As $H$ is cocompact, the action is cobounded. 
Given $x\in G$, the set
\[
\set{h\in H}{d(hx,x)\le R} = N_R(x)x\inv \cap H
\]
is compact, so the action is also proper. 
Lemma~\ref{lemma:svarc-milnor} implies that $H$ is 
compactly generated and quasi-isometric to $G$.
This in turn implies that 
if $X$ is any rough Cayley graph of $H$,
then $G$ has the necessary quasi-action on $X$ by 
Lemma~\ref{lemma:q-iso->q-action}.
\end{proof}

As a special case of the preceding result, 
we see that if a compactly generated group $G$ has a 
cocompact lattice $\Gamma$ 
(i.e.\ a cocompact discrete subgroup),
then the Cayley graph of $\Gamma$ is 
the rough Cayley graph of $G$.

\begin{example}
Consider the affine group $G = \R\rtimes \R_+$ of the real line. 
The multiplication of $G$ is defined by
\[
(u,a)(v,b) = (av + u, ab) \qquad u,v\in \R, a,b\in \R_+.
\]
Let $d$ be the metric on $G$ induced by the 
Riemannian metric $ds^2 = a^{-2}(du^2 + da^2)$;
that is, $d$ is the metric of the hyperbolic 
plane $\H^2 = \set{(u,a)\in\R^2}{a>0}$. 
Note that $d$ left-invariant and that 
$G$ with the word-length metric is quasi-isometric
to $(G,d)$ by the remark following Lemma~\ref{lemma:svarc-milnor}. 
Interpreting the elements of $G$ as complex numbers, we have the formulas 
\begin{equation} \label{eq:hyp-d}
d(x, y) = 
2\tanh\inv\biggl|\frac{x - y}{x - \conj{y}}\biggr|
= 
\log \biggl(\frac{|x - \conj{y}| + |x - y|}%
{|x - \conj{y}| - |x - y|}\biggr) 
\end{equation}
for $x,y\in G$.

Put
\[
X = \set{(e^n m,e^n)}{n,m\in \Z}\sub G.
\]
To show that $X$ determines the rough Cayley graph of 
$G$, it suffices to show by 
Lemmas~\ref{lemma:q-iso->q-action} and~\ref{lemma:q-lat} 
that $X$ is a quasi-lattice in $(G,d)$.

Let $y = (u,a)\in G$ be arbitrary.
Choose $n\in\Z$ such that $|n-\log a|\le 1/2$ 
and then $m\in\Z$ such that $|e^{-n}u - m|\le 1/2$. 
Put $x = (e^n m, e^n)$. Then $e^{-1/2} a  \le e^n \le e^{1/2} a$ and we have
\[
|x - y|\le e^n\sqrt{\frac14 + (e^{1/2} - 1)^2}.
\]
On the other hand
\[
|x - \conj{y}|\ge e^n\sqrt{\frac14 + (e^{-1/2} + 1)^2}.
\]
It follows that 
\[
d(x,y) \le 2 \tanh\inv \sqrt{\frac{\frac14 + (e^{1/2} - 1)^2}%
{\frac14 + (e^{-1/2} + 1)^2}} \approx 1.06.
\]
This shows that $X$ is coarsely dense in $(G,d)$.

Now suppose that $x = (e^{n}m, e^{n})\in X$ and 
$y = (u, a)\in G$ satisfy $d(x,y)\le R$. 
Write 
\begin{align*}
r &= \re(x - \conj{y}) = \re(x - y)
= e^{n}m - u \\
I_1 &= \im(x - \conj{y}) = e^{n} + a \\
I_2 &= \im(x - y) = e^{n} - a.
\end{align*}
As $d(x,y)\le R$, 
\[
(|x - \conj{y}| + |x - y|)^2 \le e^R
(|x - \conj{y}|^2 - |x - y|^2)
\]
and so
\[
I_1^2 + I_2^2 + 2r^2 + 
2\sqrt{(I_1^2 + r^2)(I_2^2 + r^2)}\le e^R(I_1^2 - I_2^2).
\]
Hence
\[
(I_1 + I_2)^2 + 2r^2 
\le e^R(I_1^2 - I_2^2)
\]
and so
\[
I_1 + I_2 + \frac{2r^2}{I_1+I_2} 
\le e^R(I_1 - I_2).
\]
That is
\begin{equation} \label{eq:n_1}
2e^{n}  + r^2 e^{-n}  
\le 2e^{R}a.
\end{equation}
Therefore $n \le \log a + R$, 
and by symmetry $|n - \log a|\le R$.

Assuming that $y = (u,a)$ is fixed, 
let $n = \log a + k$ be an integer such that $|k|\le R$.
Then \eqref{eq:n_1} implies that 
\[
(e^{k}a m - u)^2 \le 2 a^2 e^{k}(e^R - e^{k}),
\]
and it follows that 
\[
|m - e^{-k} a\inv u| \le \sqrt{2(e^{2R}- 1)}.
\]
So we obtain a uniform bound on the number of 
possible choices of $x = (e^nm,e^n)$ such that $d(x, y)\le R$
once $y$ is fixed. Therefore $X$ is a quasi-lattice in $(G,d)$ as required.
\end{example}

\section{Growth} \label{sec:growth}

In this section, we shall show that compactly generated groups 
have the same growth as the associated rough Cayley graphs.
In the setting of relative Cayley graphs,
Kr\"on and M\"oller obtained this result for 
totally disconnected compactly generated groups in Theorem~4.4 of
\cite{kron-moller:rough-cayley}.

Let $G$ be a locally compact group generated  
by a compact symmetric neighbourhood $K$, and denote the 
left Haar measure of $G$ by~$\lambda$. 
Recall that $G$ has \emph{polynomial growth} if $\lambda(K^m)$ is 
bounded by a polynomial in $m$,
\emph{intermediate growth} if it does not have polynomial growth 
but $\limsup \lambda(K^m)^{1/m} = 1$,
and \emph{exponential growth} otherwise. 
The definitions are independent on the choice of $K$.
The growth of a connected locally finite graph is defined similarly, 
replacing $\lambda(K^m)$ by the cardinality of vertices in $N_m(x_0)$,  
where $x_0$ is a fixed base point.
Note that the growth does not depend on the chosen base point,
although the actual values of $|N_m(x_0)|$ may.

We now fix notation for the rest of the section.
Let $X$ be a rough Cayley graph of a compactly generated group $G$. 
Let $C_1\ge 1$ and $r_1\ge 0$ be the constants 
associated with the quasi-action of $G$ on $X$. 
Let $K\sub G$ be a compact symmetric neighbourhood of the 
identity such that $K$ generates $G$, and
consider $G$ a metric space with the word-length metric 
defined with respect to $K$. Fix a base point $x_0\in X$ and  
let $C_2\ge 1$ and $r_2\ge 0$ be constants such that $s\mapsto s\cdot x_0$ 
is a $(C_2,r_2)$-quasi-isometry.
Finally, let $T$ be a maximal right uniformly discrete subset of $G$ 
with respect to $K$, and recall that $G = TK^2$. 

\begin{lemma} \label{lemma:T-dense}
For $r := 2C_2 + r_1 + r_2$, we have $X = N_{r}(T\cdot x_0)$.
\end{lemma}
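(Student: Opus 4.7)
The plan is to combine coboundedness of the quasi-action at the base point $x_0$ with the fact that $T$ is $2$-coarsely dense in $G$ for the word-length metric, and then transport this density across the orbit map $s \mapsto s\cdot x_0$ via the triangle inequality. The asymmetric-looking constant $r = 2C_2 + r_1 + r_2$ is a clue: the three summands will come from, in order, the quasi-isometry of $\phi$ applied to a distance-$2$ jump, coboundedness of the quasi-action, and the additive error of $\phi$.

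Concretely, fix $x \in X$. Since the quasi-action of $G$ on $X$ is cobounded with constant $r_1$, the orbit map $s \mapsto s\cdot x_0$ is $r_1$-coarsely onto, so I can choose $s \in G$ with $d(x, s\cdot x_0) \le r_1$. Because $T$ is a maximal right uniformly discrete subset of $G$ with respect to $K$, the identity $G = TK^2$ recalled just above the lemma gives a factorisation $s = tk$ with $t \in T$ and $k \in K^2$. Since $K$ is the symmetric generating neighbourhood defining the word-length metric, this yields $d_G(t, s) = d_G(e, k) \le 2$.

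Applying the $(C_2, r_2)$-quasi-isometric embedding $\phi \colon s \mapsto s\cdot x_0$ to the pair $(t, s)$ then produces
\[
d(t\cdot x_0, s\cdot x_0) \le C_2\, d_G(t, s) + r_2 \le 2C_2 + r_2,
\]
and a single application of the triangle inequality in $X$ gives
\[
d(x, t\cdot x_0) \le d(x, s\cdot x_0) + d(s\cdot x_0, t\cdot x_0) \le r_1 + 2C_2 + r_2 = r.
\]
Hence $x \in N_r(T\cdot x_0)$. As $x$ was arbitrary, $X = N_r(T\cdot x_0)$.

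There is no real obstacle here; the argument is a two-step approximation with bookkeeping. The only pitfall is mixing up the two constants $r_1$ and $r_2$: one must use coboundedness (constant $r_1$ of the quasi-action) to pass from $x$ to $s\cdot x_0$, and the quasi-isometric embedding (constant $r_2$ of the orbit map $\phi$) to pass from $s\cdot x_0$ to $t\cdot x_0$. Nothing about the graph structure on $X$ is used beyond its being a metric space.
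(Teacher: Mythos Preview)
Your proof is correct and follows essentially the same approach as the paper's: use coboundedness (constant $r_1$) to approximate an arbitrary point by some $s\cdot x_0$, use $G = TK^2$ to replace $s$ by some $t\in T$ at word-length distance at most $2$, and then use the $(C_2,r_2)$-quasi-isometry of the orbit map together with the triangle inequality to conclude. The paper's write-up is terser but the argument is identical.
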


\begin{proof}
Let $y\in X$ be arbitrary. Then there is $s\in G$ such that
$d(s\cdot x_0, y)\le r_1$. But $G = TK^2$ implies that there 
exists $t\in T$ such that $d_G(s,t) \le 2$. 
Hence $d(t\cdot x_0,y)\le r_1 + d(s\cdot x_0, t\cdot x_0) \le r$.
\end{proof}

For every $A\sub X$, define 
\[
\widetilde{A} = \set{t\in T}{N_{r}(t\cdot x_0)\cap A\ne \emptyset}.
\]

To simplify notation, write $C := C_2$; 
moreover, we assume that $C$ and $r$ are integers.

\begin{lemma}  \label{lemma:nuggets}
Let $A\sub X$.
\begin{enumerate}
\item $\widetilde{A}$ is right uniformly discrete with respect to $V$.
\item $M^{-r}|A| \le |\widetilde{A}| \le M^{r}|A|$ where 
      $M$ is a uniform bound on the degree of vertices in $X$. 
\item If $s\cdot x_0 \in A$, then $s\in \widetilde{A}K^{2Cr}$.
\item If $s\in \widetilde{A}K^m$, then $d(s\cdot x_0, A)\le C m+2r$.
\end{enumerate}
\end{lemma}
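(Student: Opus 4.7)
My plan is to dispatch the four claims in order, exploiting that $s\mapsto s\cdot x_0$ is a $(C,r_2)$-quasi-isometry, that $T$ is a quasi-lattice in $G$ by the argument in the proof of Theorem~\ref{thm:exists}, and that $r\ge 2C+r_2$ by the choice in Lemma~\ref{lemma:T-dense}.

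Item (i) I would reduce immediately to the construction: $\widetilde{A}\sub T$ and $T$ is right uniformly discrete with respect to $K$ by definition (I read the ``$V$'' in the statement as $K$).  For (ii) the approach is double counting via the degree bound $M$. For the upper bound, I would observe that $\{t\cdot x_0\col t\in\widetilde{A}\}\sub N_r(A)$ and $|N_r(A)|\le M^r|A|$ because $X$ has maximum degree $M$; any remaining multiplicity of the map $t\mapsto t\cdot x_0$ on $T$ is absorbed by noting that $t\cdot x_0=t'\cdot x_0$ forces $d_G(t,t')\le Cr_2$, and only finitely many $t\in T$ can lie in such a ball because $T$ is a quasi-lattice. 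For the lower bound I would use Lemma~\ref{lemma:T-dense} to assign each $a\in A$ some $\beta(a)\in\widetilde{A}$ with $d(\beta(a)\cdot x_0,a)\le r$; the fibres $\beta\inv(t)$ lie in $N_r(t\cdot x_0)$ and so have cardinality at most $M^r$, yielding $|A|\le M^r|\widetilde{A}|$.

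For (iii), given $s\in G$ with $s\cdot x_0\in A$, I would exploit $G=TK^2$ to pick $t\in T$ with $s\in tK^2$, so that $d_G(s,t)\le 2$. The quasi-isometry then gives
\[
d(t\cdot x_0,s\cdot x_0)\le 2C+r_2\le r,
\]
witnessing (with the point $a=s\cdot x_0\in A$) that $t\in\widetilde{A}$; hence $s\in tK^2\sub \widetilde{A}K^{2Cr}$ since $2Cr\ge 2$. For (iv), I would write $s=tk$ with $t\in\widetilde{A}$ and $k\in K^m$, so $d_G(s,t)\le m$. The quasi-isometry gives $d(s\cdot x_0,t\cdot x_0)\le Cm+r_2$, and by definition of $\widetilde{A}$ there is $a\in A$ with $d(t\cdot x_0,a)\le r$; the triangle inequality yields $d(s\cdot x_0,A)\le Cm+r_2+r\le Cm+2r$.

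The only semi-delicate point is the bookkeeping of combinatorial constants in (ii); resolving it reduces to choosing $M$ generously enough to absorb the uniform bound $|T\cap N_{Cr_2}(g)|\le N_0$ coming from the quasi-lattice property of $T$. Items (iii) and (iv) are then routine translations between $d_G$ and $d$ via the quasi-isometry $s\mapsto s\cdot x_0$, with the definition $r = 2C_2 + r_1 + r_2$ chosen precisely so that the one-step estimate $s\mapsto t$ falls inside the allowed slack.
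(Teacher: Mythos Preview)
Your proof is correct and essentially follows the paper's approach. The only notable difference is in (iii): you use $G = TK^2$ to find $t\in T$ with $d_G(s,t)\le 2$, then push forward via the quasi-isometry to verify $t\in\widetilde{A}$; the paper instead applies Lemma~\ref{lemma:T-dense} directly to find $t\in T$ with $d(s\cdot x_0, t\cdot x_0)\le r$ (so $t\in\widetilde{A}$ immediately since $s\cdot x_0\in A$), and then pulls back via the \emph{lower} quasi-isometry bound to get $d_G(s,t)\le C\,d(s\cdot x_0,t\cdot x_0)+Cr_2\le 2Cr$. Both arguments are equally short; yours yields the sharper containment $s\in\widetilde{A}K^2$, while the paper's makes the constant $2Cr$ appear naturally. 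Your treatment of the possible non-injectivity of $t\mapsto t\cdot x_0$ in (ii) is in fact more careful than the paper's, which tacitly passes from $|\widetilde{A}\cdot x_0|$ to $|\widetilde{A}|$; as you note, any extra constant is harmless for the growth and amenability applications.
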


\begin{proof}
The first statement is immediate from the definition of $\widetilde{A}$.

The second statement follows from the inclusions 
$\widetilde{A}\cdot x_0\sub N_{r}(A)$ and 
$A\sub N_{r}(\widetilde{A}\cdot x_0)$ 
(the latter inclusion by Lemma~\ref{lemma:T-dense}).

The third statement holds because 
$d(s\cdot x_0,t\cdot x_0)\le r$ for some $t\in \widetilde{A}$ 
(by Lemma~\ref{lemma:T-dense}), 
and hence $d_G(s,t) \le C d(s\cdot x_0, t\cdot x_0) + C r_2 \le 2Cr$.

As for the fourth statement, 
now $d_G(s,t)\le m$ for some $t\in\widetilde{A}$, so
$d(s\cdot x_0, A) \le C m + r_2 + r\le Cm + 2r$. 
\end{proof}

\begin{theorem}
A compactly generated group and its rough Cayley graph have the same growth. 
\end{theorem}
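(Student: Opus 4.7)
The plan is to use the set $T$ and the operation $A\mapsto\widetilde{A}$ of Lemma~\ref{lemma:nuggets} to sandwich each of the two growth functions, $m\mapsto\lambda(K^m)$ and $m\mapsto|N_m(x_0)|$, between a multiplicative constant times the other with argument affinely rescaled. Such an equivalence preserves polynomial, intermediate and exponential growth, so this suffices.

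To bound the growth of $X$ by that of $G$, I would fix $m$, set $A:=N_m(x_0)$ and consider $\widetilde{A}\sub T$. If $t\in\widetilde{A}$ then $d(t\cdot x_0,x_0)\le m+r$ by definition, and since $s\mapsto s\cdot x_0$ is a $(C,r_2)$-quasi-isometry this forces $d_G(t,e)\le Cm+C(r+r_2)$, so $\widetilde{A}\sub K^{Cm+N_0}$ for a constant $N_0$. Right uniform discreteness of $T$ with respect to $K$ makes the translates $\set{tK}{t\in\widetilde{A}}$ pairwise disjoint subsets of $K^{Cm+N_0+1}$, whence Lemma~\ref{lemma:nuggets}(ii) gives
\[
|N_m(x_0)|\le M^r|\widetilde{A}|\le \frac{M^r}{\lambda(K)}\,\lambda(K^{Cm+N_0+1}).
\]
For the reverse bound I would exploit $G=TK^2$: every $s\in K^m$ lies in $tK^2$ for some $t\in T\cap K^{m+2}$, hence $\lambda(K^m)\le |T\cap K^{m+2}|\,\lambda(K^2)$. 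Any such $t$ satisfies $d(t\cdot x_0,x_0)\le C(m+2)+r_2$, so $T\cap K^{m+2}\sub \widetilde{N_{Cm+N_1}(x_0)}$ for a suitable constant $N_1$, and Lemma~\ref{lemma:nuggets}(ii) then yields $|T\cap K^{m+2}|\le M^r|N_{Cm+N_1}(x_0)|$. Assembling the two inequalities shows that $\lambda(K^m)$ and $|N_m(x_0)|$ are equivalent up to affine rescaling of $m$ and constant factors, from which all three growth dichotomies transfer in either direction.

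The argument presents no genuine obstacle; the only delicate point is book-keeping of constants, and the conceptual content lies in the dual role of $T$. On the one hand its right uniform discreteness produces the disjoint translates $tK$ needed to estimate $\lambda$ from above on a sufficiently small ball; on the other hand its maximality, encoded in $G=TK^2$, turns $T$ into a covering net for balls of $G$. Both uses are transferred to the graph $X$ via the quasi-isometry $s\mapsto s\cdot x_0$ together with Lemma~\ref{lemma:nuggets}(ii), which provides the equivalence between $|A|$ and $|\widetilde{A}|$.
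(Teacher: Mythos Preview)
Your proposal is correct and follows essentially the same route as the paper: both directions pass through $\widetilde{N_m(x_0)}$ and use the quasi-isometry $s\mapsto s\cdot x_0$ together with the right uniform discreteness of $T$ to convert between $\lambda$ and cardinalities via Lemma~\ref{lemma:nuggets}(ii). The only cosmetic difference is that for the bound $\lambda(K^m)\lesssim|N_{Cm+N_1}(x_0)|$ the paper invokes Lemma~\ref{lemma:nuggets}(iii) directly, whereas you unpack the covering $G=TK^2$ by hand; these are the same argument.
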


\begin{proof}
Let $m$ be a positive integer and suppose that 
$s\in K^{m}$. Then $d(s\cdot x_0, x_0) 
\le d(s\cdot x_0, e\cdot x_0) + r_1 \le C m + r$
and so $s\in N_{Cm+r}(x_0)\latetilde K^{2Cr}$ 
by Lemma~\ref{lemma:nuggets}.
Therefore $K^{m}\sub N_{C m+r}(x_0)\latetilde K^{2Cr}$ and so
\begin{equation} \label{eq:1st-approx-A}
\lambda(K^m) \le |N_{C m+r}(x_0)\latetilde|\,
   \lambda(K^{2Cr}) \le M^{r}\lambda(K^{2Cr})|N_{C m+r}(x_0)|.
\end{equation}
Conversely, if $s\in N_m(x_0)\latetilde$, then 
$d(s\cdot x_0, x_0)\le m + r$ and so
$d_G(s, e)\le C (m+2r)$. It follows that 
\[
N_m(x_0)\latetilde K \sub K^{C (m+2r) +  1},
\]
and hence
\begin{equation} \label{eq:2nd-approx-A}
|N_m(x_0)| \le M^r |N_m(x_0)\latetilde| \le  \frac{M^{r}}{\lambda(K)} 
\lambda(K^{C (m + 2r)+ 1}),
\end{equation}
as $N_m(x_0)\latetilde$ is right uniformly discrete with respect to $K$.
Combining \eqref{eq:1st-approx-A} and \eqref{eq:2nd-approx-A} 
we see that $X$ and $G$ have the same growth.
\end{proof}

\section{Amenability} \label{sec:amen}

In this section, we shall relate the amenability of a compactly 
generated group to the amenability of its rough Cayley graph,
where the latter means amenability as a metric space, 
as defined in \cite[section~3]{block-weinberger:amenability}. 
A metric space is of  \emph{coarse bounded 
geometry} if there exists a quasi-lattice in $X$. 
In particular, a uniformly locally finite, connected graph is 
of coarse bounded geometry. 
In a metric space $X$, the $c$-\emph{boundary} of a set $A\sub X$ is 
\[
\partial_c A = \set{x\in X}{d(x,A)\le c\text{ and }d(x,X\sm A)\le c}.
\]
Consider a metric space $X$ of coarse bounded geometry, and let $\Gamma$ 
be a quasi-lattice in $X$. Then $X$ is \emph{amenable}
if for every $c>0$ and $\epsilon>0$ there is a finite set 
$A\sub \Gamma$ such that 
\[
\frac{|(\partial_c A)\cap \Gamma|}{|A|} < \epsilon.
\]
Amenability of metric spaces of coarse bounded
geometry is invariant under quasi-isometries
(and so does not depend on the choice of the quasi-lattice).
Therefore it follows from Lemma~\ref{lemma:svarc-milnor}
that the rough Cayley graph of a compactly generated group $G$ 
is amenable if and only if $G$ is amenable \emph{as a metric space} 
with respect to the word-length metric.
Hence the main result of this section implies that 
a unimodular $G$ is amenable as a metric space if and only 
if it is amenable as a locally compact group.

Recall that a locally compact group $G$ is amenable if and only if 
for every $\epsilon > 0$ and for every compact $F\sub G$ 
containing the identity there is a non-null compact set 
$L\sub G$ such that 
\[
\frac{\lambda(FL\sm L)}{\lambda(L)}<\epsilon.
\]
(This slightly unusual formulation of amenability 
is due to Emerson and Greenleaf \cite{emerson-greenleaf:folner}; 
see also \cite[Theorem~4.13]{paterson:amenability}.) 
It turns out that with our choice
of notation, the right-handed version of the above definition is actually 
the right one: replace $FL$ with $LF$ and the left Haar measure 
$\lambda$ with the right Haar measure. However, we shall only
consider the unimodular  case, so the latter part of the remark may be
ignored: $\lambda$ is both left and right invariant.

\begin{theorem} \label{thm:amen}
Suppose that $G$ is a unimodular compactly generated group. 
The rough Cayley graph of $G$ is amenable if and only 
if $G$ is amenable. 
\end{theorem}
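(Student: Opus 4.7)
The strategy is to reduce the theorem to a comparison of F\o lner-type conditions: for $G$ as a locally compact group (via the Emerson-Greenleaf characterization) and for the quasi-lattice $T$ as a discrete metric space with the induced word-length metric. By the quasi-isometry invariance recalled just before the theorem, amenability of $X$ is the same as amenability of $T$ in the F\o lner sense, so the goal is to equate the latter with amenability of $G$. The combinatorial bridge will be the tiling structure of $T$: right uniform discreteness with respect to $K$ makes the cells $\{tK:t\in T\}$ pairwise disjoint, so that $\lambda(AK)=|A|\lambda(K)$ for any finite $A\sub T$, while maximality gives $G=TK^2$. Unimodularity enters because it makes the ``right-handed'' Emerson-Greenleaf condition, $\lambda(LF\sm L)/\lambda(L)<\epsilon$, equivalent to amenability of $G$; this form fits the right-translate structure of the tiling.

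For the direction $G$ amenable $\Rightarrow X$ amenable, given $c,\epsilon>0$ I plan to fix $\delta=\epsilon\lambda(K)/\lambda(K^2)$, use the Emerson-Greenleaf criterion to choose a compact $L\sub G$ with $\lambda(LK^{c+3}\sm L)<\delta\lambda(L)$, and set $A:=T\cap LK^2$. Since every $s\in L$ lies in a cell $tK^2$ with $t\in T\cap LK^2=A$, one obtains $L\sub AK^2$ and thus $|A|\ge\lambda(L)/\lambda(K^2)$. For the outer $c$-boundary the key observation is: if $t\in T\cap AK^c$ with $t\notin A$, then $t\in LK^{c+2}\sm LK^2$, and the exclusion $t\notin LK^2$ forces $tK\cap L=\emptyset$ (otherwise $t\in LK\sub LK^2$), so the cell $tK$ sits inside $LK^{c+3}\sm L$. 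Summing over these disjoint cells gives $|N_c(A)\sm A|\,\lambda(K)\le\lambda(LK^{c+3}\sm L)<\delta|A|\lambda(K^2)$, yielding the desired bound $\epsilon$ on the ratio. Bounded geometry of $T$ then lifts this outer-boundary bound to a bound on $|\partial_c A\cap T|/|A|$.

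For the converse direction, given a compact $F\ni e$ with $F\sub K^n$ and $\epsilon>0$, I plan to pick a large integer $N\ge 2$ and use amenability of $T$ to find a finite $A\sub T$ with $|N_{N+n+2}(A)\sm A|<\delta|A|$, where $\delta=\epsilon\lambda(K)/\lambda(K^2)$, and then set $L:=AK^N$. The disjoint cells give $\lambda(L)\ge\lambda(AK)=|A|\lambda(K)$. For $s\in LF\sm L\sub AK^{N+n}\sm AK^N$ I plan to choose $t'\in T$ with $s\in t'K^2$; the triangle inequality then forces $d_G(t',A)\in[N-1,N+n+2]$, so $t'\in N_{N+n+2}(A)\sm A$. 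Hence $\lambda(LF\sm L)\le|N_{N+n+2}(A)\sm A|\,\lambda(K^2)<\delta|A|\lambda(K^2)$, so $\lambda(LF\sm L)/\lambda(L)\le\epsilon$, and amenability of $G$ follows from the Emerson-Greenleaf criterion.

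The main delicacy lies in the use of unimodularity: it is invoked in both directions to identify amenability of $G$ with the right-handed Emerson-Greenleaf condition, and the affine-group example in the introduction shows that this equivalence --- and hence the theorem --- fails when unimodularity is dropped.
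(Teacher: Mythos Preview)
Your proposal is correct. Both directions go through cleanly: in the first, $A=T\cap LK^2$ is finite since $LK^2$ is compact and $T$ is right uniformly discrete, and the inclusion $L\sub AK^2$ together with the disjoint-cell argument gives exactly the outer-boundary bound you state; your remark that bounded geometry converts an outer-boundary bound into a $\partial_c$-bound is standard. In the second, the choice $N\ge 2$ is what forces $t'\notin A$, and $N_c(A)\sm A\sub \partial_c A$ lets you extract the outer-boundary F\o lner set from the Block--Weinberger definition.

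Your route differs from the paper's in a useful way. The paper works directly on the rough Cayley graph $X$: given $A\sub X$ it forms its ``shadow'' $\widetilde A\sub T$ via the orbit map $s\mapsto s\cdot x_0$ and sets $L=\widetilde A K^{2Cr}$; conversely, from a F\o lner set $L\sub G$ it builds $A=N_{m+2r}(L\cdot x_0)\sub X$. All estimates then run through Lemma~\ref{lemma:nuggets} and carry the quasi-action constants $C,r$. You instead bypass $X$ entirely and prove directly that amenability of $G$ as a locally compact group is equivalent to amenability of the quasi-lattice $T\sub G$ with the restricted word-length metric, using only the tiling $\{tK:t\in T\}$ and the covering $G=TK^2$; the passage to $X$ is then a one-line appeal to quasi-isometry invariance. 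This is cleaner (no quasi-action constants, no auxiliary lemma) and makes explicit the statement the paper mentions just before the theorem, namely that unimodular $G$ is amenable as a group iff it is amenable as a metric space. The paper's approach, on the other hand, keeps the argument phrased intrinsically in terms of the rough Cayley graph and its quasi-action, which matches the emphasis of the surrounding sections. Both use unimodularity in exactly the same place: to identify amenability of $G$ with the right-handed Emerson--Greenleaf condition compatible with the right-translate tiling.
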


\begin{proof}
We continue with the notation set up in section~\ref{sec:growth}.
Suppose first that the rough Cayley graph $X$ is amenable. 
Since every compact set is contained in some $K^m$,
it is enough to deal with these sets.
Given an integer $m$ and $\epsilon>0$, choose $A\sub X$
such that 
\[
\frac{|\partial_{Cm+2(C ^2+1)r} A|}{|A|} < \epsilon.
\]
Define $L = \widetilde{A}K^{2C r}$.

Claim: 
\[
LK^m\sm L\sub (\partial_{Cm+2(C ^2+1)r} A)\latetilde K^{2C r}.
\]
Let $s \in LK^m\sm L$.
Then  $s\in \widetilde{A}K^{m+2C r}$, and 
so 
\[
d(s\cdot x_0, A) \le Cm + 2(C ^2+1)r
\]
by Lemma~\ref{lemma:nuggets}.
On the other hand, if $s\cdot x_0\in A$, then $s\in L$
by Lemma~\ref{lemma:nuggets}. Therefore $s\cdot x_0\in \partial_{Cm+2(C ^2+1)r} A$ 
and the claim follows again by Lemma~\ref{lemma:nuggets}.

It follows from the claim that
\[
\lambda(LK^m\sm L) \le 
M^{r}\lambda(K^{2C r})|\partial_{Cm+2(C ^2+1)r} A|.
\]
On the other hand, $L \bus \widetilde{A}K$ and 
$\widetilde{A}$ is right uniformly discrete with respect to $K$,
so $\lambda(L)\ge |\widetilde{A}|\lambda(K)$.
Therefore
\[
\frac{\lambda(LK^m\sm L)}{\lambda(L)} \le
\frac{M^{2r}\lambda(K^{2C r})}{\lambda(K)}\, 
 \frac{|\partial_{Cm+2(C ^2+1)r} A|}{|A|}\le
\frac{M^{2r}\lambda(K^{2C r})}{\lambda(K)}\epsilon.
\]
Consequently, $G$ is amenable.

Conversely, suppose that $G$ is amenable. 
Let $\epsilon>0$ and let $m$ be a positive integer.
Write $F = K^{C (2m+3r)+r + 1}$.
Then there is a compact set $L\sub G$ such that 
\[
\frac{\lambda(LF\sm L)}{\lambda(L)} < \epsilon.
\]
Define
\[
A = N_{m + 2r }(L\cdot x_0).
\]
Claim 2: 
\[
(\partial_m  A)\latetilde K \sub LF\sm L.
\]
Let $s\in (\partial_m  A)\latetilde$. Then there is $x\in \partial_m  A$
such that $d(s\cdot x_0, x)\le r$ and hence $t\in L$ 
such that $d(s\cdot x_0, t\cdot x_0) \le 2m + 3r$. 
Therefore $s\in LK^{C (2m+3r)+r}$.  
If $sv\in L$ for some $v\in K$, then 
\[
d(sv\cdot x_0, x) \le  d(sv\cdot x_0, s\cdot x_0) + r \le C_2d(sv,s)+r_2 + r
\le 2r - 1 
\]
and so $d(L\cdot x_0, X\sm A)\le m + 2r - 1$.
This is a contradiction so $sv\notin L$ and claim~2 is proved.

It follows from claim~2 that 
\begin{equation} \label{eq:dA-card}
|\partial_m A| \le  \frac{M^{r}}{\lambda(K)} \lambda(LF \sm L). 
\end{equation}
On the other hand, if $s\in L$, then $s\cdot x_0\in A$
and hence $s\in \widetilde{A}K^{2C r}$.
Therefore
\[
\lambda(L) \le |\widetilde{A}|\lambda(K^{2C r})
\]
and so
\begin{equation} \label{eq:A-card}
|A| \ge \frac{\lambda(L)}{M^{r}\lambda(K^{2C r})}. 
\end{equation}
Combining the approximations \eqref{eq:dA-card} and \eqref{eq:A-card} 
we have
\[
\frac{|\partial_m A|}{|A|} \le  
  \frac{M^{2r}\lambda(K^{2C r})}{\lambda(K)}\,
  \frac{\lambda(LF \sm L)}{\lambda(L)}
\le  \frac{M^{2r}\lambda(K^{2Cr})}{\lambda(K)}\epsilon.
\]
Consequently, $X$ is amenable.
\end{proof}

The unimodularity assumption of the preceding theorem is necessary:
the affine group of the real line is amenable as a locally compact group, 
but as shown in section~\ref{sec:rough}, its rough Cayley 
graph is quasi-isometric to the hyperbolic plane, 
which is not amenable as a metric space
\cite[Example~3.47]{roe:coarse-geometry}.

\begin{corollary}
Suppose that $G_1$ and $G_2$ are unimodular compactly generated 
groups that are quasi-isometric. 
Then $G_1$ is amenable if and only if $G_2$ is amenable.
\end{corollary}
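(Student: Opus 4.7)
The plan is to reduce everything to Theorem~\ref{thm:amen} via the rough Cayley graph, using the quasi-isometry invariance of amenability for metric spaces of coarse bounded geometry.

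First I would pick, using Theorem~\ref{thm:exists}, a rough Cayley graph $X_i$ of $G_i$ for $i=1,2$. Fixing a base point $x_0^i\in X_i$, Lemma~\ref{lemma:svarc-milnor} applied to the proper cobounded quasi-action of $G_i$ on $X_i$ shows that $s\mapsto s\cdot x_0^i\col G_i\to X_i$ is a quasi-isometry. Composing the quasi-isometry $G_1\to X_1$ (a coarse inverse of the above), the hypothesised quasi-isometry $G_1\to G_2$, and the quasi-isometry $G_2\to X_2$, I obtain a quasi-isometry $X_1\to X_2$.

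Second, each $X_i$ is a uniformly locally finite connected graph, hence a metric space of coarse bounded geometry (the vertex set is itself a quasi-lattice in $X_i$). As recalled at the start of Section~\ref{sec:amen}, amenability of such metric spaces is invariant under quasi-isometry, so $X_1$ is amenable as a metric space if and only if $X_2$ is.

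Finally, since both $G_1$ and $G_2$ are unimodular, Theorem~\ref{thm:amen} gives that $G_i$ is amenable (as a locally compact group) if and only if $X_i$ is amenable. Chaining these equivalences yields the corollary. There is essentially no obstacle: the statement is a packaging of Theorem~\ref{thm:amen} with the two already-established transitivity facts (quasi-isometry between a group and its rough Cayley graph, and quasi-isometry invariance of metric amenability).
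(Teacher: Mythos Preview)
Your argument is correct and matches the paper's intended reasoning: the corollary is stated without proof there, but the paragraph preceding Theorem~\ref{thm:amen} already spells out that $G$ is amenable as a locally compact group iff it is amenable as a metric space (via the rough Cayley graph and quasi-isometry invariance of metric amenability), which is exactly the chain you assemble. One small slip: where you write ``the quasi-isometry $G_1\to X_1$ (a coarse inverse of the above)'' you mean $X_1\to G_1$, so that the composite $X_1\to G_1\to G_2\to X_2$ makes sense.
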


\begin{corollary}
Suppose that $G$ is a compactly generated group
and that $\Gamma$ is a cocompact lattice in $G$.
Then $G$ is amenable if and only if $\Gamma$ is amenable.
\end{corollary}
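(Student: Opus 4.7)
The plan is to realize the corollary as a chain of equivalences: $G$ amenable $\iff$ rough Cayley graph of $G$ amenable as a metric space $\iff$ (same graph viewed as) Cayley graph of $\Gamma$ amenable $\iff$ $\Gamma$ amenable. The two outer equivalences will come from Theorem~\ref{thm:amen}, while the middle identification will use the preceding theorem about cocompact subgroups, which says that any rough Cayley graph of $\Gamma$ may be viewed as a rough Cayley graph of $G$.

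The step I expect to need the most care is checking that Theorem~\ref{thm:amen} actually applies at the $G$ end: it requires $G$ to be unimodular. The hypothesis that $\Gamma$ is a cocompact lattice forces this. Indeed, the modular function $\Delta_G\col G\to \R_+$ is a continuous homomorphism, it is trivial on $\Gamma$ (since discrete groups are unimodular), and $G = K\Gamma$ for some compact $K$ by cocompactness, so $\Delta_G(G) = \Delta_G(K)$ is a compact subgroup of $\R_+$, hence trivial. (Alternatively, Haar measure descends to a $G$-invariant finite measure on the compact space $G/\Gamma$, which is well known to force unimodularity.) At the $\Gamma$ end, unimodularity is automatic because $\Gamma$ is discrete. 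Thus Theorem~\ref{thm:amen} applies to both groups.

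With unimodularity in hand, I would proceed as follows. By the preceding theorem, $\Gamma$ is compactly generated; being discrete, it is finitely generated. Its ordinary Cayley graph (with respect to any finite symmetric generating set) is a rough Cayley graph of $\Gamma$, since the left action of $\Gamma$ on it is a proper cobounded isometric action; and by the preceding theorem together with Theorem~\ref{thm:exists}, this same graph is also a rough Cayley graph of $G$, unique up to quasi-isometry. Applying Theorem~\ref{thm:amen} to $\Gamma$ gives that $\Gamma$ is amenable iff its Cayley graph is amenable as a metric space; applying it to $G$ gives that $G$ is amenable iff its rough Cayley graph is amenable. Since amenability of a metric space of coarse bounded geometry is a quasi-isometry invariant, the two graph conditions coincide, and the four statements are equivalent, giving the corollary.
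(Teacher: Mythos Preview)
The paper states this corollary without proof; your argument is exactly the intended one—identify the Cayley graph of $\Gamma$ with the rough Cayley graph of $G$ via the cocompact-subgroup theorem of Section~\ref{sec:rough}, then apply Theorem~\ref{thm:amen} to both groups, after checking that $G$ is unimodular.

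One small caution on that last step: the phrase ``since discrete groups are unimodular'' only gives $\Delta_\Gamma\equiv 1$, not $\Delta_G|_\Gamma\equiv 1$, and these are a priori different. Your conclusion is correct—a locally compact group containing a cocompact discrete subgroup is always unimodular—but the actual argument is the fundamental-domain one (any two Borel fundamental domains for $\Gamma$ have the same left Haar measure, and right translation by $\gamma\in\Gamma$ takes one to another while scaling by $\Delta_G(\gamma)^{\pm1}$), which then feeds into your $G=K\Gamma$ step exactly as you wrote. Your parenthetical alternative implicitly assumes the same fact, since Haar measure descends to a $G$-invariant measure on $G/\Gamma$ precisely when $\Delta_G|_\Gamma=\Delta_\Gamma$.
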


\begin{acknow}
I thank the Emil Aaltonen Foundation for support 
and Eeva-Maija Puputti for inspiration. 
\end{acknow}

\end{document}